\theoremstyle{plain}
\newtheorem{theorem}{Theorem}[section]
\newtheorem{proposition}[theorem]{Proposition}
\newtheorem{lemma}[theorem]{Lemma}
\theoremstyle{definition}
\newtheorem{definition}[theorem]{Definition}
\newtheorem{example}[theorem]{Example}
\newtheorem{problem}[theorem]{Problem}
\newtheorem{question}[theorem]{Question}
\newtheorem{conjecture}[theorem]{Conjecture}
\newtheorem{remark}[theorem]{Remark}
\DeclareMathOperator{\Ind}{Ind}
\DeclareMathOperator{\gr}{gr}
\DeclareMathOperator{\ord}{ord}
\newcommand{\Nate}[1]{\texttt{\color{red}Nate: #1}}
\newcommand{\Josh}[1]{\texttt{\color{blue}Josh: #1}}
\title{Higher Congruences in Character Tables}
\author{Nate Harman, Joshua Mundinger}
\date{August 4, 2026}
\begin{document}

\maketitle

\begin{abstract}
Motivated by recent work of Peluse and Soundararajan on divisibility properties of the entries of the character tables of symmetric groups, we investigate the question: For a finite group $G$, when are two columns of the character table of $G$ congruent to one another modulo a power of a prime?

\end{abstract}

\section{Introduction}

In \cite{miller19} Miller made the following beautiful conjecture, which was proved in a pair of papers by Peluse and Soundararajan.

\begin{theorem}[\cite{PS22,PS25}]
For any fixed natural number $r$, the proportion of entries in the character table of $S_n$ which are divisible by $r$ tends to 1 as $n$ goes to infinity.
\end{theorem}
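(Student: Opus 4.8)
The plan is to establish the equivalent quantitative statement that the number of \emph{bad} pairs $(\lambda,\mu)$ of partitions of $n$ --- those with $r\nmid \chi^\lambda(\mu)$ --- is $o(p(n)^2)$, where $p(n)$ is the partition function and the character table of $S_n$ is the $p(n)\times p(n)$ matrix $\bigl(\chi^\lambda(\mu)\bigr)$. As a first reduction, if $r=\prod_i \ell_i^{k_i}$ then $r\mid\chi^\lambda(\mu)$ iff $\ell_i^{k_i}\mid\chi^\lambda(\mu)$ for all $i$, so a union bound over the finitely many $\ell_i$ lets us assume $r=\ell^k$ is a prime power. The basic tool throughout is the Murnaghan--Nakayama rule, presenting $\chi^\lambda(\mu)$ as a signed count of rim-hook tableaux of shape $\lambda$ and content $\mu$, together with its $\ell$-blockwise refinement via the $\ell$-abacus: removing a rim hook of length $\ell d$ from $\lambda$ amounts to removing a rim hook of length $d$ from one component of the $\ell$-quotient $(\lambda^{(0)},\dots,\lambda^{(\ell-1)})$ with a controlled change of sign, so peeling parts of $\mu$ divisible by $\ell$ is ``the same as'' working inside the $\ell$-quotient.

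Next I would fix a column $\mu$ and try to show that, for $\mu$ outside a thin set, the number of $\lambda$ with $\ell^k\nmid\chi^\lambda(\mu)$ is $o(p(n))$; call such $\mu$ \emph{generic}. The crux is to isolate a combinatorial condition on the multiset of parts of $\mu$ --- controlling, roughly, the $\ell$-adic valuations and multiplicities of the parts and their sizes relative to $n$ --- that (i) holds for all but $o(p(n))$ partitions $\mu$, and (ii) implies genericity. For (ii) one peels the parts of $\mu$ one at a time by Murnaghan--Nakayama, routing the parts divisible by $\ell$ through the $\ell$-quotient; the claim is that for all but a negligible family of shapes $\lambda$ --- the exceptions being those of small $\ell$-weight, which are rare since a typical $\lambda$ has $\ell$-weight of order $n/\ell$ --- the recursion at almost every stage either manufactures an explicit factor of $\ell$ or branches symmetrically enough that the surviving signed sum is divisible by $\ell^k$. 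A convenient auxiliary input is the elementary congruence $\chi^\lambda(g^\ell)\equiv\chi^\lambda(g)^\ell\equiv\chi^\lambda(g)\pmod{\ell}$ (valid because all character values of $S_n$ lie in $\mathbb{Z}$), which shows that, modulo $\ell$, one may subdivide every part of $\mu$ divisible by $\ell$ into copies of its $\ell$-th part and so reduce to columns with all parts prime to $\ell$; the prime-power strengthening instead demands a quantitative, iterated accounting of $\ell$-adic valuations up the tower of $\ell^j$-quotients.

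Finally, for the non-generic columns one shows there are few of them: the failure of the combinatorial condition confines $\mu$ to a thin set whose cardinality is $o(p(n))$ by a Hardy--Ramanujan-type partition estimate, and this set contributes at most $o(p(n))\cdot p(n)=o(p(n)^2)$ bad pairs. Adding the contributions of generic and non-generic columns gives the bound $o(p(n)^2)$, which is the theorem. The main obstacle is step (ii): pushing divisibility --- and, when $k>1$, higher $\ell$-adic divisibility --- through the Murnaghan--Nakayama recursion \emph{uniformly over almost every} $\lambda$; this is the technical core spread across the two papers, and it is substantially harder for prime powers than for a single prime.
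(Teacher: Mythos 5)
There is a genuine gap: what you have written is a strategy outline, not a proof, and the decisive step is exactly the one you leave unargued. Your step (ii) --- showing that for a ``generic'' column $\mu$ the Murnaghan--Nakayama recursion, routed through the $\ell$-quotient, forces $\ell^k \mid \chi^\lambda(\mu)$ for all but $o(p(n))$ shapes $\lambda$ --- is the entire content of the theorem, and you give no mechanism for it beyond the hope that the recursion ``either manufactures an explicit factor of $\ell$ or branches symmetrically enough''; you yourself flag it as the technical core of the two cited papers. The pieces you do establish (reduction to prime powers by a union bound, the Hardy--Ramanujan bound on a thin exceptional set of columns, the congruence $\chi(g^\ell)\equiv\chi(g) \bmod \ell$ for integer-valued characters) are the easy parts, and the last of these is only a mod-$\ell$ statement that does not iterate to give divisibility by $\ell^k$, which is where the real difficulty lies. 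Note also that the paper you are reading does not prove this theorem at all: it quotes it from Peluse and Peluse--Soundararajan, and only uses their key representation-theoretic input, namely Lemma \ref{lemma: PSlemma}.

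It is worth contrasting your sketch with the route that is known to work, because they are genuinely different. Peluse and Soundararajan do not try to push $\ell$-adic valuations through the rim-hook recursion uniformly over $\lambda$. Instead they fix the column: the congruence of Lemma \ref{lemma: PSlemma} lets one replace, modulo $p^m$, $p^m$ repeated parts of size $k$ by $p^{m-1}$ parts of size $pk$, and iterating this aggregation shows that for almost every partition $\mu$ (which, with high probability, has many repeated small parts) the column of $\mu$ is congruent modulo $p^m$ to a column whose cycle type contains very long parts. On such a column the Murnaghan--Nakayama rule makes $\chi^\lambda$ vanish unless $\lambda$ admits a correspondingly long rim hook, and the number of such $\lambda$ is $o(p(n))$. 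So the analytic input is a statement about almost all $\mu$ (abundance of repeated parts), and the uniformity over $\lambda$ comes for free from a vanishing statement, rather than from a delicate divisibility analysis of surviving signed sums inside $\ell$-quotients, for which you offer no argument and which is not how the prime-power case is handled in the literature.
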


When $r$ is a prime number (or a product of distinct primes) this was generalized to wreath products $G \wr S_n$ when $G$ has rational character table and to Weyl groups of type $D_n$ in \cite{reu22}. 

In both of the Peluse and Soundararajan arguments, as well as the generalization to wreath products, a key representation-theoretic ingredient to the proof is a lemma which gives a criterion for when two columns of the character table of $S_n$ are congruent modulo a prime power $p^m$.
Let $\sim_m^{comb}$ denote the equivalence relation on partitions generated by $\lambda \sim_m^{comb} \mu$ if $\mu$ is formed from $\lambda$ by replacing $p^m$ parts of size $k$ with $p^{m-1}$ parts of size $kp$.  For $\sigma$ and $\sigma'$ in $S_n$, we say $\sigma \sim_m^{comb} \sigma'$ if the partitions corresponding to their cycle types are equivalent via $\sim_m^{comb}$.

\begin{lemma}[\cite{PS25}, Lemma 2.1] \label{lemma: PSlemma}
For $\sigma$, $\sigma'$ in $S_n$, if $\sigma \sim_m^{comb} \sigma'$, then $\chi(\sigma) \equiv \chi(\sigma') \mod p^m$ for all characters $\chi$ of $S_n$.
\end{lemma}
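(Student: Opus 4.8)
The plan is to move everything into the ring $\Lambda_{\mathbb{Z}}$ of symmetric functions with integer coefficients and to exploit the classical Frobenius expansion of power‑sum symmetric functions in the Schur basis. Recall that for a partition $\rho$ of $n$ one has $p_\rho = \sum_{\lambda \vdash n} \chi^\lambda(\rho)\, s_\lambda$, where $\chi^\lambda$ is the irreducible character of $S_n$ indexed by $\lambda$, and that the Schur functions $\{s_\lambda\}$ form a $\mathbb{Z}$-basis of $\Lambda_{\mathbb{Z}}$. Hence, for partitions $\rho,\rho'$ of $n$, the assertion that $\chi(\sigma) \equiv \chi(\sigma') \pmod{p^m}$ for every irreducible $\chi$ — and therefore, by linearity, for every character — is \emph{equivalent} to the single statement $p_\rho - p_{\rho'} \in p^m \Lambda_{\mathbb{Z}}$, where $\rho,\rho'$ are the cycle types of $\sigma,\sigma'$. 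Since $\sim_m^{comb}$ is the equivalence relation generated by elementary moves and congruence mod $p^m$ is transitive, it suffices to treat a single move: $\rho'$ is obtained from $\rho$ by replacing $p^m$ parts equal to $k$ with $p^{m-1}$ parts equal to $kp$ (note this preserves $n$, since $p^m k = p^{m-1}(kp)$). Writing $q \in \Lambda_{\mathbb{Z}}$ for the product of the power sums $p_j$ over the parts $j$ common to $\rho$ and $\rho'$, we have $p_\rho = p_k^{p^m}\, q$ and $p_{\rho'} = p_{kp}^{p^{m-1}}\, q$, so the whole lemma reduces to the key congruence
\[
p_k^{p^m} \equiv p_{kp}^{p^{m-1}} \pmod{p^m \Lambda_{\mathbb{Z}}}.
\]

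I would prove this by induction on $m$. For the base case $m=1$, the multinomial theorem gives $p_k^{p} = \big(\sum_i x_i^k\big)^p = \sum_i x_i^{kp} + (\text{terms } \tfrac{p!}{a_1!\cdots}\prod_i x_i^{k a_i})$, where the remaining terms correspond to compositions of $p$ with at least two nonzero parts, and every such multinomial coefficient is divisible by $p$; hence $p_k^{p} \equiv p_{kp} \pmod p$. For the inductive step, assume $p_k^{p^{m-1}} = p_{kp}^{p^{m-2}} + p^{m-1} h$ with $h \in \Lambda_{\mathbb{Z}}$, and raise both sides to the $p$-th power:
\[
p_k^{p^m} = \big(p_{kp}^{p^{m-2}} + p^{m-1}h\big)^{p} = p_{kp}^{p^{m-1}} + \sum_{j=1}^{p} \binom{p}{j}\, p_{kp}^{p^{m-2}(p-j)}\, p^{(m-1)j}\, h^{j}.
\]
Each summand with $j \ge 1$ is divisible by $p^m$: for $j=1$ the factor $\binom{p}{1}=p$ together with $p^{m-1}$ supplies $p^m$, and for $j \ge 2$ one already has $(m-1)j \ge 2(m-1) \ge m$ since $m \ge 2$. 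This gives $p_k^{p^m} \equiv p_{kp}^{p^{m-1}} \pmod{p^m}$ and closes the induction.

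Assembling the pieces: the key congruence yields $p_\rho \equiv p_{\rho'} \pmod{p^m \Lambda_{\mathbb{Z}}}$ for one elementary move, the Schur expansion converts this into $\chi^\lambda(\sigma) \equiv \chi^\lambda(\sigma') \pmod{p^m}$ for every $\lambda$, and transitivity handles an arbitrary chain of moves. The only points demanding care are the valuation bookkeeping in the inductive step and the insistence on working over $\mathbb{Z}$ (so that the Schur functions genuinely form an integral basis and a congruence of symmetric functions is equivalent to simultaneous congruences of all character values); I do not anticipate any deeper obstacle, as the substance of the argument is the "freshman's dream" base case bootstrapped through the binomial expansion.
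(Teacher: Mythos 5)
Your proof is correct, but it takes a genuinely different route from the paper's. The paper deduces Lemma \ref{lemma: PSlemma} (as the implication i) $\Rightarrow$ iii) of Proposition \ref{proposition: symmetric-group-congruence}) by first showing that the combinatorial move is an instance of the group-theoretic relation $\sim_m$ of Definition \ref{definition: prime-power-relation}: a product of $p^m$ disjoint $k$-cycles is $\tau^{p^m}$ for a $kp^m$-cycle $\tau$ commuting with the rest of $\sigma$, so $\sigma'$ is conjugate to $(\tau^{p-1})^{p^{m-1}}\sigma$, and then it invokes Theorem \ref{theorem: prime-power-congruence}, which is proved for an arbitrary finite group by realizing an unramified character over the maximal unramified extension of $\mathbb{Q}_p$ (Proposition \ref{prop: realizability}), splitting $g$ into $p$-semisimple and $p$-unipotent parts, and reducing to the cyclotomic trace computation of Lemma \ref{lemma: unipotent-congruence}. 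You instead use the Frobenius expansion $p_\rho = \sum_\lambda \chi^\lambda(\rho)\, s_\lambda$ and the fact that Schur functions are a $\mathbb{Z}$-basis to convert the lemma into the single congruence $p_k^{p^m} \equiv p_{kp}^{p^{m-1}} \pmod{p^m \Lambda_{\mathbb{Z}}}$, which you prove by the standard induction (your inductive step is the general ring fact that $a \equiv b \pmod{p^s}$ implies $a^p \equiv b^p \pmod{p^{s+1}}$, bootstrapped from the Frobenius congruence $p_k^p \equiv p_{kp} \pmod p$); the valuation bookkeeping is right, and your reduction is in fact an equivalence, so the lemma becomes exactly the statement that $p_\rho = p_{\rho'}$ in $\Lambda_{\mathbb{Z}}/p^m$, a reformulation that could also be useful for the converse direction treated in Proposition \ref{proposition: symmetric-group-congruence}. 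What your route buys is elementarity and self-containedness, avoiding any $p$-adic representation theory; what it gives up is generality, since it is special to $S_n$ (character values as transition coefficients between power sums and Schur functions), whereas the paper's Theorem \ref{theorem: prime-power-congruence} delivers the same congruence for every finite group and every unramified character, which is the main point of the paper.
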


Having in mind the general goal of investigating character table divisibility questions as in Miller-Peluse-Soundararajan, the motivating question of this paper is: 
\begin{quote}
    \emph{For a finite group $G$, when are two columns of the character table of $G$ congruent to one another modulo $p^m$?}
\end{quote}

\begin{remark}

This question is reminiscent of, but distinct from, the theory of blocks in modular representation theory. Two characters are in the same $p$-block if they have the same central character modulo $p$ -- this means that the \emph{rows} of the central character table (a renormalization of the usual one) are congruent modulo $p$. For symmetric groups the Nakayama conjecture (now a theorem) says that the $p$-blocks are determined combinatorially by the $p$-core of a partition.  Our story is in a sense orthogonal to that in that we are interested in congruences between \emph{columns} of the character table.

\end{remark}

The remainder of the paper is structured as follows:

\subsubsection*{Section 2: Unramified Character Tables}  
We want to talk about entries of the character table of a finite group being congruent modulo a power of a prime $p$.  For the symmetric groups $S_n$, all of the character table entries are integers, so this makes sense automatically. However, for a general group $G$, the entries of the ordinary character table may not be rational integers and instead may live in some finite integral extension of $\mathbb{Z}$.  If this extension is ramified at $p$, then there may be no well-behaved notion of two entries being congruent modulo $p^m$. 


In §\ref{section: unramified table} we introduce the notion of the unramified character table of a group $G$ at a prime $p$, and prove some basic properties of them. Morally, the unramified character table is as close as you can get to working over $\mathbb{C}$, while still ensuring you have a well-behaved notion of congruence modulo $p^m$ for the entries.  We believe this is the ``right" setting for asking Miller-Peluse-Soundararajan-type divisibility questions about general groups.

In many natural cases of interest the unramified character table is the same as either the ordinary character table or the rational character table. So depending on the interests of the reader this section may be skipped.

\subsubsection*{Section 3: Higher Congruences}

A key step in the Peluse and Soundararajan argument is a combinatorial characterization of when two columns of the character table of $S_n$ are congruent to one another modulo a prime power $p^m$.  
When $m=1$ there is a general criterion essentially due to Brauer for when two columns of the character table are congruent modulo $p$.  However for $m>1$ the story seems to be much less well understood.


Our first main technical result is Proposition \ref{prop: upper bound by orthogonality}, which establishes an upper bound on the largest power of $p$ for which two columns of an unramified character table can be congruent without being forced to the same column. Proposition \ref{prop: upper bound for p-groups} gives an improvement to this bound in the important special case where the group is a $p$-group.

We then define an equivalence relation $\sim_m$ on a general group $G$, which is a generalization of the relation $\sim_m$ on $S_n$. Our second main technical result is Theorem \ref{theorem: prime-power-congruence} which says that if $g \sim_m g'$, then $\chi(g) \equiv \chi(g') \mod p^{m}$ for all unramified characters $\chi$ of $G$.

\subsubsection*{Section 4: Revisiting the Symmetric Group}

We then revisit the combinatorial criterion for when two columns of the character table of $S_n$ are congruent modulo a prime power $p^m$. We show that in this case $\sim_m$ indeed agrees with $\sim_m^{comb}$. Moreover we show in Proposition \ref{proposition: symmetric-group-congruence} that the converse of Lemma \ref{lemma: PSlemma} also holds: two columns of the character table of $S_n$ are congruent modulo $p^m$ if and only if the corresponding partitions are equivalent under $\sim_m^{comb}$ (as far as we can tell, only the ``if" direction is in the existing literature for $m > 1$).  

\subsubsection*{Section 5: Questions and Conjectures} 

Finally we close out by posing a number of related questions and conjectures.

\subsection*{Acknowledgments}  Thanks to Alexander Miller for helpful conversations.  We'd also like to thank our REU students Brandon Dong, Skye Rothstein, Lola Vescovo, Hannah Graff, Saskia Solotko, Devin Brown, Francisco Gutierrez, Rylan Rosas, Chris Wu, and Charles Troutman for inspiring these investigations. 

N.H.\ was partially supported by the National Science Foundation grant DMS-2401515. J.M.\ was partially supported by the NSF GRFP and by the NSF MSPRF Award No. 2503534. Any opinions, findings, and conclusions or recommendations expressed in this material are those of the authors and do not necessarily reflect the views of the National Science Foundation.

\section{The Unramified Character Table}\label{section: unramified table}

When are two columns of the character table congruent modulo $p^m$? 
For symmetric groups, all of the character values are integers, so it is clear what this question means.  
For a general finite group, characters live in the  ring of integers $\mathcal{O}$ of a number field $K$, and so the question needs refinement.

One natural attempt is to pick a prime ideal $\mathfrak{p} \subset \mathcal{O}$ lying over $p$ and ask whether two columns are congruent modulo $\mathfrak{p}^m$.
The answer to that question does not depend on the choice of prime ideal $\mathfrak{p}$: the Galois group $Gal(K/\mathbb{Q})$ acts on both the characters of $G$ and the prime ideals over $p$ compatibly\footnote{It is safe to assume $K/\mathbb Q$ is Galois, in which case the Galois group acts transitively on primes over $p$. In fact $K/\mathbb Q$ can be taken to be a cyclotomic extension by Brauer's theorem \cite[§12.3]{serre_linear-representations}.},
so if $\sigma \in Gal(K/\mathbb Q)$, then 
$$\chi(g) \equiv \chi(h) \mod \mathfrak{p}^m \iff \chi^{\sigma}(g) \equiv \chi^\sigma(h) \mod \sigma(\mathfrak{p})^m.$$

However, asking for congruence modulo $\mathfrak p^m$ is ill-behaved in the presence of ramification.
Consider for example $G = \mathbb Z/p \mathbb Z$, in which case the characters are valued in $K = \mathbb Q(\zeta_p)$ where $\zeta_p$ is a primitive $p$-th root of unity. The prime $p$ is totally ramified in $K$, and $(p) = \mathfrak p^{p-1}$ for $\mathfrak p=(\zeta_p-1)$.
In this case, one finds for all $g \neq h \in G$ and nontrivial simple $\chi$ that $\chi(g) \equiv \chi(h) \mod \mathfrak p$ but $\chi(g) \not\equiv \chi(h) \mod \mathfrak p^2$.
This is perhaps unexpected, since the Galois group of $\mathbb Q(\zeta_p)/\mathbb Q$ acting on the character table has two orbits, the identity and non-identity columns, while congruence treats the identity and non-identity columns the same.

We propose studying a slightly different notion of congruence. Rather than working with the ordinary character table, whose entries may be ramified at $p$, we work with a smaller character table whose values are guaranteed to be unramified. This is the \emph{unramified character table}.

\subsection{Unramified characters}

Fix a finite group $G$ and a prime $p$ dividing $|G|$.
Given a complex character $\chi$ of $G$,
let $\mathbb Q(\chi)$ be the field extension of $\mathbb Q$ generated by the values of $\chi$. 

\begin{definition}
    A character $\chi$ is \emph{unramified} at $p$ if the field extension $\mathbb Q(\chi)/\mathbb Q$ is unramified at $p$.
\end{definition}

Let $R_{p'}(G)$ be the ring of characters of $G$ unramified at $p$.
The ring $R_{p'}(G)$ is essentially determined by a cyclotomic Galois action. We summarize this procedure here.
By Brauer's induction theorem, $\mathbb Q(\chi) \subseteq \mathbb Q(e^{2\pi i/|G|})$ \cite[§10.5]{serre_linear-representations}.
Let $|G| = rp^e$ for $r$ prime to $p$, and let $\Gamma$ be the kernel of the projection $(\mathbb Z/|G|\mathbb Z)^\times \to (\mathbb Z/r\mathbb Z)^\times$. The group $\Gamma$ is isomorphic to $(\mathbb Z/p^e\mathbb Z)^\times$ by the Chinese Remainder Theorem,
and is the Galois group of $\mathbb Q(e^{2\pi i/|G|})/\mathbb Q(e^{2\pi i/r})$.
\begin{definition}
    A \emph{$\Gamma$-conjugacy class} of $G$ is an equivalence class of the relation on $G$ generated by conjugation and $x \sim x^s$ for $s \in \Gamma$.
\end{definition}    

\begin{proposition}[\cite{serre_linear-representations} §12.4, Corollary 1]\label{prop:  basis for gamma-class functions}
    \label{prop: character-conjugacy}
    A character $\chi$ is in $R_{p'}(G)$ if and only if $\chi$ is constant on $\Gamma$-conjugacy classes.
    The rank of $R_{p'}(G)$ is the number of $\Gamma$-conjugacy classes.
\end{proposition}

It follows from Proposition \ref{prop: character-conjugacy} that a basis for $R_{p'}(G)$ is given by sums of orbits of complex irreducible characters under the action of $\Gamma$ via $\chi^s(g) = \chi(g^s)$. These are the characters of the irreducible unramified representations of $G$, that is, those representations over an unramified extension of $\mathbb{Q}$ which remain irreducible under any unramified extension of the base field.
The \emph{unramified character table} is then the table of values of irreducible unramified characters on $\Gamma$-conjugacy classes.

\begin{remark}\label{remark: when is ramified = rational?}
    If $G$ has rational character table, then the unramified character table is the usual character table; if $G$ is a $p$-group, then the unramified character table is the rational character table.
\end{remark}

\begin{example}
    Let $G = C^{p^e} = \langle \sigma\rangle$ be a cyclic group of order a power of $p$.
    Then $\Gamma = (\mathbb Z/p^e\mathbb Z)^\times$.
    The $\Gamma$-conjugacy classes in $G$ are represented by $1,\sigma,\sigma^p,\ldots, \sigma^{p^{e-1}}$.
    The characters of $G$ are $\chi_\ell: \sigma \mapsto \zeta^\ell$ for $\zeta$ a primitive $p^{e}$th root of unity. A set of orbit representatives for $\Gamma$ acting on the characters of $G$ are $\chi_{p^0},\chi_{p^1},\ldots, \chi_{p^e} = \chi_0$; the unramified characters are sums over these orbits.
    For example, the unramified character table of $C^{p^4}$ is as below:
    \begin{center}
    \begin{tabular}{|c|ccccc|}\hline 
            & 1 & $\sigma$    & $\sigma^p$ & $\sigma^{p^2}$ & $\sigma^{p^3}$\\ \hline 
        $\chi_{p^4}$ & 1  & 1     & 1     &   1   & 1 \\ 
        $[\chi_{p^3}]$ & $p-1$ & $p-1$ & $p-1$    & $p-1$ & $-1$ \\ 
        $[\chi_{p^2}]$ & $p^2-p$ & $p^2-p$ & $p^2-p$ & $-p$ & 0 \\
        $[\chi_{p}]$ & $p^3 -p^2$ & $p^3 -p^2$ & $-p^2$ & 0 & 0\\
        $[\chi_{1}]$ &$p^4 - p^3$ & $-p^3$ & 0 & 0 & 0 \\ \hline
    \end{tabular}
    \end{center}
\end{example}

\begin{example}
        Let $V$ be an $n$-dimensional vector space over $\mathbb F_p$. Then the $\Gamma$-conjugacy classes in $V$ are exactly $0$ and $\mathbb P(V)$, the projectivization of $V$. After choosing an additive character $\psi: \mathbb F_p \hookrightarrow \mathbb C^\times$, the complex irreducible characters of $V$ are in bijection with the dual space $V^\ast$ via the pairing $V^\ast \times V \to \mathbb F_p \to \mathbb C^\times$. The $\Gamma$-orbits on complex characters are $\{0\} \cup \mathbb P(V^\ast)$, so the irreducible unramified characters are in bijection with $\{0\}\cup \mathbb P(V^\ast)$. The unramified character table is:
        \begin{center}
            \begin{tabular}{|c|c|c|}\hline 
            & 0 & $x \in \mathbb P(V)$ \\ \hline
            0 & 1 & 1 \\ \hline 
            $H \in \mathbb P(V^\ast)$ & $p-1$ & $
            \begin{cases} p-1 & x \in H \\ -1 & x \notin H\end{cases}$ \\ \hline 
            \end{tabular}
        \end{center}
\end{example}

\subsection{Realizability}

By definition, an unramified character $\chi$ has $\mathbb Q(\chi)$ unramified at $p$. This does not imply the character $\chi$ is realizable by a representation over $\mathbb Q(\chi)$. However, there is an unramified extension of $\mathbb Q_p(\chi)$ over which $\chi$ is realized. Since the values of a general character $\chi$ are algebraic integers, the field $\mathbb Q_p(\chi)$ makes sense, and $\chi$ is unramified at $p$ if and only if $\mathbb Q_p(\chi)/\mathbb Q_p$ is unramified.

\begin{proposition}\label{prop: realizability}
    If $\chi$ is a character of $G$ unramified at $p$,
    then there is a representation of $G$ with character $\chi$ defined over an unramified extension of $\mathbb Q_p(\chi)$.
\end{proposition}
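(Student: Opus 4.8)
The plan is to reduce to the case of a single $\Gamma$-orbit sum and then invoke the fact that over a $p$-adic field every central division algebra is split by an unramified extension.

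\emph{Reduction to orbit sums.} Write $\chi = \sum_i n_i \psi_i$ with $\psi_i$ the distinct complex irreducible constituents and $n_i \in \mathbb Z_{\geq 0}$. By Proposition~\ref{prop: character-conjugacy}, $\chi$ is constant on $\Gamma$-conjugacy classes, i.e.\ $\chi(g) = \chi(g^s)$ for all $s \in \Gamma$; since $\psi \mapsto \psi^s$, where $\psi^s(g) = \psi(g^s)$, permutes the complex irreducibles, the multiplicities $n_i$ must be constant along $\Gamma$-orbits. Hence $\chi = \sum_{\mathcal O} n_{\mathcal O}\, \theta_{\mathcal O}$ is a nonnegative integer combination of orbit sums $\theta_{\mathcal O} = \sum_{\psi \in \mathcal O}\psi$. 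Each $\theta_{\mathcal O}$ is $\Gamma$-fixed, so its values lie in $\mathbb Q_p(\zeta_r)$, which is unramified over $\mathbb Q_p$. If each $\theta_{\mathcal O}$ is afforded by a representation $W_{\mathcal O}$ over some finite unramified extension $F_{\mathcal O}/\mathbb Q_p$, then over the compositum $F = \prod_{\mathcal O} F_{\mathcal O}$, again unramified over $\mathbb Q_p$, the module $\bigoplus_{\mathcal O} W_{\mathcal O}^{\oplus n_{\mathcal O}}$ affords $\chi$; and as $\mathbb Q_p(\chi) \subseteq F$ with both fields unramified over $\mathbb Q_p$, the extension $F/\mathbb Q_p(\chi)$ is unramified, which is what is wanted.

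\emph{The orbit-sum case.} Fix a complex irreducible $\psi$ with orbit sum $\theta$. Let $L = \mathbb Q_p(\psi)$, let $L_0 \subseteq L$ be the maximal subextension unramified over $\mathbb Q_p$, of degree $f_0$, and let $m = m_{\mathbb Q_p}(\psi)$ be the Schur index, equal to the index of the division algebra $\Delta$ underlying the Wedderburn component of $L[G]$ affording $\psi$; thus $\Delta$ is central over the local field $L$ and its class in $\mathrm{Br}(L) \cong \mathbb Q/\mathbb Z$ has order $m$. Take $F$ to be the unramified extension of $\mathbb Q_p$ of degree $m f_0$. Then $F \supseteq L_0$, and since $F$ is unramified, $F \cap L = L_0$; hence $[FL : L] = [F : L_0] = m$, so the base change $\Delta \otimes_L FL$ has invariant $m\cdot\mathrm{inv}(\Delta) = 0$ in $\mathbb Q/\mathbb Z$, i.e.\ $FL$ splits $\Delta$ and $m_F(\psi) = 1$. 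Therefore the unique $F$-irreducible $FG$-module $W$ containing a Galois conjugate of the absolutely irreducible module $V$ of character $\psi$ satisfies $\overline{\mathbb Q}_p \otimes_F W \cong \bigoplus_{\tau \in \mathrm{Gal}(FL/F)} V^\tau$, each conjugate appearing once, so its character is $\sum_{\tau \in \mathrm{Gal}(FL/F)}\psi^\tau$.

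\emph{Identifying the character, and the obstacle.} It remains to check that $\{\psi^\tau : \tau \in \mathrm{Gal}(FL/F)\}$ is exactly the $\Gamma$-orbit of $\psi$. Canonically $\mathrm{Gal}(FL/F) \cong \mathrm{Gal}(L/L_0)$, the inertia subgroup of $\mathrm{Gal}(L/\mathbb Q_p)$. On the other hand $\mathbb Q_p(\zeta_r)$ is the maximal unramified subextension of $\mathbb Q_p(\zeta_{|G|})/\mathbb Q_p$, so $\Gamma = \mathrm{Gal}(\mathbb Q_p(\zeta_{|G|})/\mathbb Q_p(\zeta_r))$ is the inertia subgroup there; as $L \subseteq \mathbb Q_p(\zeta_{|G|})$, the restriction map carries $\Gamma$ onto $\mathrm{Gal}(L/L_0)$, compatibly with the action $\psi \mapsto \psi^s$ on characters. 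Hence the two orbits agree, the character of $W$ is $\theta$, and since $\mathbb Q_p(\theta) \subseteq L_0 \subseteq F$ with $F/\mathbb Q_p$ unramified, $W$ is the desired representation. The one step requiring care — and the main obstacle — is precisely this orbit identification: one must ensure that enlarging the base to an unramified $F$ neither merges nor splits Galois orbits of characters in the wrong way, which is exactly guaranteed by $F \cap \mathbb Q_p(\zeta_{|G|}) = \mathbb Q_p(\zeta_r)$. Everything else is the cited structure of $R_{p'}(G)$ together with standard facts about Schur algebras and the Brauer group of a local field; in fact the same argument shows $\chi$ is afforded over the maximal unramified extension $\mathbb Q_p^{\mathrm{ur}}$, and hence over some finite unramified extension of $\mathbb Q_p(\chi)$.
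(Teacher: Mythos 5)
Your argument is correct, but it takes a much more hands-on route than the paper. The paper's proof is three lines: let $L=\mathbb Q_p^{\mathrm{ur}}$; the Brauer group of $L$ is trivial (Serre, \emph{Local Fields}, Ch.\ XII); hence by the Corollary in \cite[\S 12.2]{serre_linear-representations} every character with values in $L$ is afforded by an $L$-representation; and an unramified $\chi$ has values in $L$. What you have done is essentially re-prove that corollary by hand in the case at hand: you decompose $\chi$ into $\Gamma$-orbit sums, identify the obstruction for each orbit sum as the class of a division algebra $\Delta$ central over the local field $L=\mathbb Q_p(\psi)$, and kill it by passing to an unramified extension of degree divisible by the index, using $\mathrm{inv}(\Delta\otimes_L FL)=[FL:L]\cdot\mathrm{inv}(\Delta)$. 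The arithmetic input is the same in both cases --- central simple algebras over local fields are split by unramified extensions, equivalently $\mathrm{Br}(\mathbb Q_p^{\mathrm{ur}})=0$ --- but your version buys an explicit finite unramified field of definition (degree $m f_0$ over $\mathbb Q_p$) and makes visible the point you rightly flag as delicate, namely that the $\mathrm{Gal}(FL/F)$-orbit of $\psi$ coincides with its $\Gamma$-orbit because $\Gamma$ is exactly the inertia subgroup of $\mathrm{Gal}(\mathbb Q_p(\zeta_{|G|})/\mathbb Q_p)$. The paper's citation-based proof avoids all of this bookkeeping at the cost of yielding only a representation over the (infinite) maximal unramified extension, which still suffices for the statement since any such representation descends to a finite subextension. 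One cosmetic remark: what you call $m_{\mathbb Q_p}(\psi)$ is computed as the index of the component of $L[G]$, i.e.\ the local Schur index over $L=\mathbb Q_p(\psi)$; this is the standard definition, so no harm done, but the notation briefly reads as if two different indices were being conflated.
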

\begin{proof}
    Let $L$ be the maximal unramified extension of $\mathbb Q_p$.
    By \cite[Chapter XII, Theorem 1]{serre_local-fields}, the Brauer group of $L$ is trivial.
    Hence by \cite[§12.2, Corollary]{serre_linear-representations}, every character of $G$ with values in $L$ is afforded by a representation defined over $L$.
    If $\chi$ is unramified at $p$, then $\mathbb Q(\chi) \subseteq L$.
\end{proof}

A nontrivial extension of $\mathbb Q_p(\chi)$ may be necessary, as in the next example: 
\begin{example}
    Consider $G = \{\pm 1, \pm i, \pm j, \pm k\}$, the order 8 group of unit quaternions.
    The group $G$ has one absolutely irreducible character of degree greater than 1, given by $\chi(\pm 1) = \pm 2$ and $\chi(\pm i) = \chi(\pm j) = \chi(\pm k) = 0$.
    Since $\chi$ is integer-valued, $\chi$ is unramified at all primes.
    To construct a representation affording this character, consider the quaternion algebra
    \[ A= \left( \frac{-1,-1}{\mathbb Q}\right).\] 
    Then $G \subseteq A$, and the induced map $\mathbb Q G \to A$ is the simple factor of $\mathbb QG$ corresponding to the character $\chi$. 
    Thus, for a field $K/\mathbb Q$, $\chi$ is afforded by a $KG$-module if and only if $A \otimes_{\mathbb Q} K$ splits.
    
    It is well-known that $A\otimes_\mathbb Q K$ splits if and only if $-1=  a^2 + b^2$ for $a,b \in K$ (see  \cite[Exercise 12.3]{serre_linear-representations}).
    Thus $\chi$ is not realizable over $\mathbb Q_2 = \mathbb Q_2(\chi)$, since $-1$ is not a sum of squares in $\mathbb Q_2$.
    However, the number field $K = \mathbb Q(\zeta_3)$ for $\zeta_3$ a primitive $3$rd root of unity is unramified at 2; 
    since $-1 = \zeta_3^2 + \zeta_3 = \zeta_3^2 + (\zeta_3^2)^2,$
    the character $\chi$ is afforded by a $\mathbb Q(\zeta_3)G$-module.
\end{example}

\section{Higher congruences}


When are two columns of the unramified character table of $G$ congruent modulo some power of $p$?
The question of congruence modulo $p$ is entirely addressed by Brauer's work on modular characters. Two elements are congruent mod $p$ for all unramified characters if and only if they agree on all Brauer characters. If $g = g_sg_u$ is the $p$-semisimple and $p$-unipotent decomposition of $g\in G$, then $g$ and $g'$ are equal on all Brauer characters if and only if $g_s$ and $g'_s$ are conjugate \cite[§18.1]{serre_linear-representations}. 

This answers the question of when two columns are congruent modulo $p$. On the other hand, if two columns are congruent modulo a sufficiently large power of $p$, then they are actually equal: 

\begin{proposition}\label{prop: upper bound by orthogonality}
    Let $e= \ord_p|G|$, and let $f$ be the maximum of $\ord_p \langle \chi,\chi\rangle$ as $\chi$ ranges over unramified irreducible characters of $G$.
    If $g_1,g_2 \in G$ are such that $\chi(g_1) \equiv \chi(g_2) \mod p^{e+f+1}$ for all unramified irreducible characters $\chi$, then $g_1$ and $g_2$ are $\Gamma$-conjugate.
\end{proposition}
\begin{proof}
    We proceed by column orthogonality of the unramified character table.
    Let $C_\Gamma(g)$ be the $\Gamma$-conjugacy class of $g\in G$, and let $\delta_{C_\Gamma(g)}$ be the $\Gamma$-class function 
    \[\delta_{C_\Gamma(g)}(h) = \begin{cases} 1 & h \sim_\Gamma g \\ 0 & h \not\sim_\Gamma g.\end{cases}\]
    By Proposition \ref{prop:  basis for gamma-class functions}, the unramified irreducible characters form a basis for $\Gamma$-class functions, so
    $\delta_{C_\Gamma(g)} = \sum_\chi \left(\langle \delta_{C_\Gamma(g)},\chi\rangle/\langle \chi,\chi\rangle\right) \chi$,
    where the sum is over unramified irreducible characters. By definition $\langle \delta_{C_\Gamma(g)},\chi\rangle = |C_\Gamma(g)|\chi(g^{-1})/|G|$. Thus for $g_1,g_2 \in G$,
    \[ 
    \sum_{\chi \text{ unr. irr.}} \frac{\chi(g_1)\chi(g_2^{-1})}{\langle\chi,\chi\rangle} = 
    \begin{cases}
        |G|/|C_\Gamma(g_1)| & g_1 \sim_\Gamma g_2 \\ 
        0 & g_1 \not\sim_\Gamma g_2.
    \end{cases}
    \]
    Suppose $g_1,g_2 \in G$ are not $\Gamma$-conjugate. Let $M$ be the least common multiple of $\langle\chi,\chi\rangle$ over the unramified irreducible characters $\chi$. Then $\sum_\chi \chi(g_1)\chi(g_2^{-1})/\langle\chi,\chi\rangle = 0$, while 
    \[ \sum_{\chi\text{ unr. irr.}} \frac{M}{\langle \chi,\chi\rangle} \chi(g_1)\chi(g_1^{-1}) = \frac{M|G|}{|C_\Gamma(g_1)|}.\]
    The left-hand side is an integer combination of unramified character values, while the right-hand side has $p$-adic valuation at most $\ord_p M + \ord_p|G| = e + f$. If $\chi(g_1)\equiv \chi(g_2) \mod p^{e+f+1}$ for all $\chi$, then we obtain
    \[ 0 \equiv \frac{M|G|}{|C_\Gamma(g_1)|} \mod p^{e+f+1},\]
    a contradiction. Hence $g_1$ and $g_2$ are $\Gamma$-conjugate, as desired.
\end{proof}
This bound works best when the complex character values of $G$ are rational, or more generally, tamely ramified over $\mathbb Q$, in which case $f=0$. If $G$ is a $p$-group, then the character values may be wildly ramified. For example, if $G = C_{p^e}$, then $f = e-1$, and Proposition \ref{prop: upper bound by orthogonality} states that distinct $\Gamma$-conjugacy classes can be congruent to order at most $p^{2e-1}$. This is much worse than necessary. For a $p$-group, we have a better bound, which is tight for $C_{p^e}$:
\begin{proposition}\label{prop: upper bound for p-groups}
    If $G$ is a $p$-group of order $p^e$ and $g_1,g_2 \in G$ are such that $\chi(g_1)\equiv \chi(g_2) \mod p^{e+1}$ for all unramified characters of $G$, then $g_1$ and $g_2$ are $\Gamma$-conjugate.
\end{proposition}
\begin{proof}
    If $G$ is a $p$-group, then $g_1$ and $g_2$ are $\Gamma$-conjugate if and only if $g_1$ and $g_2$ generate conjugate cyclic subgroups. So let $C$ be the cyclic subgroup generated by $g_1$.
    Consider $\chi_1 = \Ind_C^G 1$. Then $\chi_1(g_1) = [N_G(C):C]$, so $\ord_p(\chi_1(g_1)) \leq e$,
    while $\chi_1(g) = 0$ if $g$ is not conjugate to an element of $C$.
    As $\chi_1(g_2) \equiv \chi_1(g_1) \neq 0 \mod p^{e+1}$,
    $\chi_1(g_2)$ is not zero.
    Hence, $g_2$ is conjugate into $C$.
    
    Now let $\chi_2 = \Ind_{C^p}^G 1$.
    The same argument shows $\ord_p(\chi_2(g_2)) \leq e$ if $g_2$ is conjugate into $C^p$, while $\chi_2(g_1) = 0$. Hence $g_2$ is not conjugate to an element of $C^p$. Since $C$ is a $p$-group, this implies $g_2$ is conjugate to a generator of $C$, so $g_1$ and $g_2$ generate conjugate cyclic subgroups of $G$.
\end{proof}

In between these two extremes, it is more difficult to determine the order of congruence between two columns of the unramified character table. We present a sufficient criterion for two columns to be congruent modulo some power $p^m$.
Motivation comes from the following reframing of Brauer's condition: 
consider the relation $\sim$ on $G$ generated by conjugacy and the relation $g \sim gh$ if $h$ has order a power of $p$ and commutes with $g$.
If $g, g' \in G$, the $p$-semisimple parts $g_s$ and $g'_s$ are conjugate if and only if $g \sim g'$.
We will show in Theorem \ref{theorem: prime-power-congruence} that if $h$ as above is a $p$-th power in $G$, then a deeper congruence between $g$ and $gh$ is obtained. 

To prove this theorem, we first deal with the case when $g$ and $h$ are both of order a power of $p$, the ``totally ramified'' case.
\begin{lemma} \label{lemma: unipotent-congruence}
    Let $K$ be a finite extension of $\mathbb Q$ or $\mathbb Q_p$ unramified over $p$. If $V$ is a finite-dimensional vector space over $K$ and $A,B\in GL(V)$ are each of order a power of $p$ and satisfy $AB = BA$, then 
    \[ tr(AB^{p^{m-1}}) \equiv tr(A) \mod p^{m}.\]
\end{lemma}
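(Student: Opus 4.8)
The plan is to base-change to $\bar{\mathbb Q}_p$, to express $tr(AB^{p^{m-1}}) - tr(A)$ as a sum over the Galois orbits of the characters of the finite abelian $p$-group $P = \langle A,B\rangle$, and to estimate each orbit's contribution $p$-adically. If $K$ is a number field unramified at $p$, it suffices to prove the congruence after completing at each prime above $p$, so in all cases we may assume $K$ is a finite unramified extension of $\mathbb Q_p$; note that $tr(A)$ and $tr(AB^{p^{m-1}})$ are then elements of $\mathcal O_K$, being sums of roots of unity lying in $K$.

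Let $\Delta = \operatorname{Gal}(\bar{\mathbb Q}_p/K)$ act on the character group $\widehat P = \operatorname{Hom}(P, \mu_{p^\infty})$ through the cyclotomic character, by $\chi \mapsto \chi^s$ with $s \in \mathbb Z_p^\times$; because $K/\mathbb Q_p$ is unramified, the image of $\Delta$ in $\mathbb Z_p^\times$ is all of $\mathbb Z_p^\times$. Base-changing to $\bar{\mathbb Q}_p$ decomposes $V$ into joint eigenspaces $V = \bigoplus_{\chi \in \widehat P} V_\chi$, and since $A$ and $B$ are defined over $K$ the multiplicity function $\chi \mapsto \dim V_\chi$ is $\Delta$-invariant. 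Combining this with the identity
\[ tr(AB^{p^{m-1}}) - tr(A) = \sum_{\chi \in \widehat P} (\dim V_\chi)\, \chi(A)\bigl(\chi(B)^{p^{m-1}} - 1\bigr), \]
it is enough to show that each $\mathbb Z_p^\times$-orbit $\mathcal O \subseteq \widehat P$ satisfies
\[ \ord_p\Bigl(\sum_{\chi \in \mathcal O} \chi(A)\bigl(\chi(B)^{p^{m-1}} - 1\bigr)\Bigr) \geq m. \]
Fixing a representative $\chi_0$, writing $\alpha = \chi_0(A)$, $\beta = \chi_0(B)$, and letting $p^d$ be the order of the cyclic group $\langle \alpha,\beta\rangle$, the stabilizer of $\chi_0$ is $1 + p^d\mathbb Z_p$, so the orbit sum becomes
\[ S_{\mathcal O} := \Sigma_d\bigl(\alpha\beta^{p^{m-1}}\bigr) - \Sigma_d(\alpha), \qquad \text{where} \quad \Sigma_d(\xi) := \sum_{s \in (\mathbb Z/p^d\mathbb Z)^\times} \xi^s \quad (\xi \in \mu_{p^d}). \]

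It remains to estimate $\ord_p(S_{\mathcal O})$. The standard evaluation gives $\Sigma_d(\xi) = p^{d-1}(p-1)$, $-p^{d-1}$, or $0$ according as $\xi$ has order $1$, order $p$, or order at least $p^2$, so $\ord_p\Sigma_d(\xi) \geq d-1$ always. If $\beta^{p^{m-1}} = 1$ then $S_{\mathcal O} = 0$; otherwise $\beta$ has order $p^c$ with $c \geq m$, hence $d \geq c \geq m$. When $d > m$ this already yields $\ord_p(S_{\mathcal O}) \geq d - 1 \geq m$, and when $d = m$ — so that $\beta^{p^{m-1}}$ has order $p$ while $\alpha$ has order $p^a$ for some $a \leq m$ — one checks the three cases $a = 0$, $a = 1$, and $a \geq 2$ (in the last of which $\alpha$ and $\alpha\beta^{p^{m-1}}$ both have order $p^a \geq p^2$, forcing $S_{\mathcal O} = 0$) and finds $\ord_p(S_{\mathcal O}) \geq m$ in each.

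I expect this final valuation estimate to be the main obstacle: an individual summand $\chi(B)^{p^{m-1}} - 1$ can have $p$-adic valuation as small as $1/(p-1)$, so divisibility by $p^m$ only appears after averaging over a complete Galois orbit, and the boundary case $d = m$ is sharp, since $S_{\mathcal O} = \pm p^m$ does occur there. The other ingredients — the passage to a local field, the orbit decomposition, and the cyclotomic identity for $\Sigma_d$ — are routine.
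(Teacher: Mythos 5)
Your proof is correct and is essentially the paper's argument in different clothing: your Galois-orbit sums $\Sigma_d$ are exactly the traces $tr_{L/K}$ on the simple $K[A,B]$-factors $L = K(\zeta)$, $\zeta$ a primitive $p^d$-th root of unity, that the paper computes (values $p^{d-1}(p-1)$, $-p^{d-1}$, $0$), with unramifiedness of $K$ at $p$ used in the same way to obtain the full cyclotomic Galois action. The only real difference is the final step, where the paper repackages $tr_{L/K}$ as a difference of regular characters of cyclic $p$-groups and argues once, while you carry out the equivalent valuation case analysis on $d$ and the orders of $\alpha$ and $\beta$ directly.
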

\begin{proof}
    Since we are taking a trace, we may replace $V$ with its semisimplification under the action of $K[A,B]$.
    By the Nullstellensatz, the simple $K[A,B]$-modules are of the form $L$ for $L/K$ a finite extension generated by $A,B\in L$.
    Since $A$ and $B$ have order a power of $p$, $L = K(\zeta)$ for some primitive $p^j$th root of unity $\zeta$, and $A$ and $B$ are both powers of $\zeta$.
    Since $p$ is unramified in $K$, the trace form for $\xi \in \langle \zeta\rangle$ is given by
    \[ tr_{L/K}(\xi) = \begin{cases}
        p^j - p^{j-1} & \xi = 1 \\
        -p^{j-1} & \xi^p = 1, \xi \neq 1 \\
        0 & \text{else}
    \end{cases}
    \] 
    so that $tr_{L/K} = \chi_1 - \chi_2$ for $\chi_1$ the regular character of $\langle \zeta\rangle$ and $\chi_2$ the regular character of $\langle \zeta\rangle/\langle \zeta^{p^{j-1}}\rangle$.
    
    Thus, it suffices to show that if $C$ is a cyclic $p$-group with regular character $\chi$ and $A,B \in C$, then $\chi(AB^{p^{m-1}}) \equiv \chi(A) \mod p^{m}$.
    The only case when $\chi(AB^{p^{m-1}}) \neq \chi(A)$ is if exactly one of $AB^{p^{m-1}}$ and $A$ is not the identity. Hence $B^{p^{m-1}}$ is not the identity, so $C$ must have order greater than $p^{m-1}$. Thus $p^{m} \mid |C|$. But the nonzero value of $\chi$ is $|C|$. Hence $\chi(AB^{p^{m-1}}) \equiv \chi(A) \mod p^{m}$, establishing the claim.
\end{proof}

With the totally ramified case in hand, the general case follows by factoring into unramified and totally ramified cases:

\begin{theorem}\label{theorem: prime-power-congruence}
    Suppose that $g, h \in G$, $h$ commutes with $g$, and $h$ has order a power of $p$.
    Then for all unramified characters $\chi$ of $G$,
    \[ \chi(g) \equiv \chi(gh^{p^{m-1}}) \mod p^{m}.\]
\end{theorem}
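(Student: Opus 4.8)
The plan is to pass to a $p$-adic realization of $\chi$ and then reduce the statement to Lemma~\ref{lemma: unipotent-congruence}. First, since congruence modulo $p^m$ is $\mathbb Z$-linear in $\chi$ and $R_{p'}(G)$ is spanned by orbit sums of complex irreducible characters (which are genuine characters), it suffices to treat the case that $\chi$ is a genuine unramified character. Then $\mathbb Q_p(\chi)/\mathbb Q_p$ is unramified, so by Proposition~\ref{prop: realizability} there is a representation $\rho\colon G \to GL(V)$ over a finite unramified extension $K/\mathbb Q_p$ affording $\chi$. Since the congruence can be checked after embedding $\mathbb Q(\chi)$ into such a $K$, it is enough to prove $\chi(g) \equiv \chi(gh^{p^{m-1}}) \mod p^m$ inside $\mathcal O_K$.

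Next I would restrict to the abelian subgroup $H = \langle g, h\rangle$ (abelian since $h$ commutes with $g$) and decompose $V$ as a $K[H]$-module. Because $K$ has characteristic $0$ and $H$ is abelian, $V|_H$ is a direct sum of simple $K[H]$-modules, and each such module is a one-dimensional vector space over a field $L = K(\theta(H)) \supseteq K$ on which $H$ acts through a character $\theta\colon H \to L^\times$; the trace of $\rho(x)$ on that summand is $tr_{L/K}(\theta(x))$ for $x \in H$. Hence it is enough to show, for each summand, that $tr_{L/K}(\theta(gh^{p^{m-1}})) \equiv tr_{L/K}(\theta(g)) \mod p^m$.

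Fix such a summand and write $H = H_{p'} \times H_p$ with $H_p$ the Sylow $p$-subgroup, so $g = g_{p'}g_p$ with $g_{p'} \in H_{p'}$, $g_p \in H_p$, and $h \in H_p$. Set $L^{(0)} = K(\theta(H_{p'}))$; since $K/\mathbb Q_p$ is unramified and we are only adjoining roots of unity of order prime to $p$, the extension $L^{(0)}/\mathbb Q_p$ is still unramified, and $L = L^{(0)}(\zeta)$ for $\zeta$ a generator of the cyclic $p$-group $\theta(H_p)$. Now $\theta(g_{p'})$ is a root of unity of order prime to $p$ lying in $L^{(0)}$, hence a unit of $\mathcal O_{L^{(0)}}$, while $\theta(g_p)$ and $\theta(h)$ are commuting roots of unity of $p$-power order in $L$. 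Using $tr_{L/K} = tr_{L^{(0)}/K} \circ tr_{L/L^{(0)}}$ and pulling the scalar $\theta(g_{p'}) \in L^{(0)}$ out of $tr_{L/L^{(0)}}$, the claim reduces to
\[ tr_{L/L^{(0)}}(\theta(g_p)\theta(h)^{p^{m-1}}) \equiv tr_{L/L^{(0)}}(\theta(g_p)) \mod p^m \mathcal O_{L^{(0)}}, \]
which is exactly Lemma~\ref{lemma: unipotent-congruence} applied with base field $L^{(0)}$, vector space $L$, and the two commuting $p$-power-order operators given by multiplication by $\theta(g_p)$ and by $\theta(h)$. Multiplying back by the unit $\theta(g_{p'})$ preserves this congruence, and applying $tr_{L^{(0)}/K}$ sends $p^m\mathcal O_{L^{(0)}}$ into $p^m\mathcal O_K$; summing over the summands of $V|_H$ finishes the proof.

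The step I expect to be the main obstacle is the field-theoretic bookkeeping in the third paragraph: verifying that each simple constituent of $V|_H$ really is a field that factors as an unramified extension $L^{(0)}/\mathbb Q_p$ followed by a cyclotomic $p$-power extension, that the prime-to-$p$ part $\theta(g_{p'})$ genuinely lies in the unramified layer $L^{(0)}$, and that the ordinary operator trace of $\rho(x)$ on a summand equals $tr_{L/K}(\theta(x))$. Once this structure is correctly in place, the entire arithmetic content is packaged in Lemma~\ref{lemma: unipotent-congruence}; the opening reductions (linearity in $\chi$, and that it suffices to check the congruence inside an unramified $\mathcal O_K$) are routine but worth stating carefully.
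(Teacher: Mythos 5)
Your proof is correct and is essentially the paper's argument: both realize $\chi$ over an unramified $p$-adic field via Proposition \ref{prop: realizability}, peel off the prime-to-$p$ part of $g$ as a unit scalar, and invoke Lemma \ref{lemma: unipotent-congruence} for the two commuting operators of $p$-power order. Your decomposition of $V|_{\langle g,h\rangle}$ into one-dimensional field summands plays the same role as the paper's weight-space decomposition for $g_s$ (and in fact partially repeats the semisimplification step already carried out inside the proof of Lemma \ref{lemma: unipotent-congruence}), so the difference is only cosmetic.
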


\begin{proof}
    Let $L = \mathbb Q^{ur}_p$ be the maximal unramified extension of $\mathbb Q_p$.
    By Proposition \ref{prop: realizability}, there is a $LG$-module $V$ affording $\chi$.
    Consider the radical filtration on $V$ defined by the action of $g$, and let $\gr(V)$ be the associated graded. 
    Since $h$ commutes with $g$, the action of $h$ on $V$ induces an action of $h$ on $\gr(V)$.
    
    Let $g = g_sg_u$ be the $p$-semisimple and $p$-unipotent decomposition of $g$.
    Since $g_s$ has order prime to $p$, all its eigenvalues lie in $L$, so $\gr(V)$ has a weight space decomposition with respect to the semisimple operator $g_s$. 
    Let $W$ be a weight space for $g_s$.
    Since $g_s$ and $g_u$ are powers of $g$,
    $h$ commutes with $g_s$ and $g_u$. Hence $g_u$ and $h$ are commuting operators on $W$.
    By Lemma \ref{lemma: unipotent-congruence}, $tr(g_u|W) \equiv tr(g_uh^{p^{m-1}}|W) \mod p^{m}$.
    Since $g_s$ is acting by a scalar on $W$, $tr(g|W) \equiv tr(gh^{p^{m-1}}|W) \mod p^{m}$.
    Summing over all weights of $g_s$ gives the desired result.
\end{proof}

\begin{definition} \label{definition: prime-power-relation}
    Given a group $G$ and $m \geq 1$, let $\sim_{m}$ be the equivalence relation on $G$ generated by conjugation and $g \sim_m gh^{p^{m-1}}$ whenever $h$ commutes with $g$ and has order a power of $p$.
\end{definition}

Theorem \ref{theorem: prime-power-congruence} shows that if $g \sim_m g'$, then $\chi(g) \equiv \chi(g') \mod p^{m}$ for all $\chi$. However, the converse does not hold for every group.

\begin{example}  \label{heisenbergex}
    Let $H$ be the Heisenberg group of order $p^{2n+1}$. If $(V,\omega)$ is a $2n$-dimensional symplectic vector space over $\mathbb F_p$, then $H$ is the central extension 
    \[ 0 \to Z=\mathbb F_p \to H \to V \to 0\]
    given by the 2-cocycle $\omega$.
    The complex characters with nontrivial central character are of the form $\psi = \frac{1}{p^n} \Ind_Z^H \eta$ for $\eta$ a nontrivial character of the cyclic group $Z$. Hence, there is a single irreducible unramified representation with nontrivial central action, namely $\psi^{nr} = \frac{1}{p^n}\Ind^H_Z IZ$ for $IZ$ the augmentation ideal of $Z$. For $z \in Z$, we then have $\psi^{nr}(z) = p^n(-1)$, while $\psi^{nr}(1) = p^n(p-1)$. Since all other unramified irreducible representations have trivial central action, we obtain that $\chi(z) \equiv \chi(1)\mod p^n$ for all unramified characters $\chi$ of $H$.
    
    However, all elements in $H$ are of order $p$, so there are no non-trivial $p$th powers. Hence, no nontrivial higher congruences are explained by Theorem \ref{theorem: prime-power-congruence}.
\end{example}

\begin{remark}
    It is known that if $B$ is a square integer matrix, then $tr(B^{p^{m-1}}) \equiv tr(B^{p^m}) \mod p^m$ \cite{Ste17}, which for $m=1$ is a generalization of Fermat's little theorem to matrices.
    Thus, if $B$ is an integer matrix whose order is a power of $p$, then inductively applying $tr(B^{p^{i-1}}) \equiv tr(B^{p^i}) \mod p^i$ for $i \geq m$ implies that $tr(B^{p^{m-1}}) \equiv tr(1)\mod p^m$, which is the conclusion of Lemma \ref{lemma: unipotent-congruence} when $A=1$.
    It would be interesting to understand more connections between these congruences.
\end{remark}

\section{Higher congruences in the symmetric group}

While in general, the relation $\sim_m$ of Definition \ref{definition: prime-power-relation} does not always coincide with congruence modulo $p^m$ for $m > 1$,
we will show that in the case of the symmetric group $S_n$ that $\sim_m$ does coincide with congruence modulo $p^m$ for all $m$. To show this, we give a combinatorial characterization of the relation $\sim_{m}$ on elements of the symmetric group. 

Recall that for $\sigma \in S_n$, the \emph{cycle type} of $\sigma$ is the partition of $n$ formed by the lengths of cycles of $\sigma$.
The conjugacy classes of $S_n$ are in bijection with partitions of $n$ via cycle type. 
\begin{definition}
    The relation $\sim_m^{comb}$ on partitions is the equivalence relation generated by $\lambda \sim_m^{comb} \mu$ if $\mu$ is formed from $\lambda$ by replacing $p^m$ parts of size $k$ with $p^{m-1}$ parts of size $kp$.
\end{definition}
Let $\sim_m^{comb}$ also denote the relation on conjugacy classes of $S_n$ defined this way.
\begin{proposition}\label{proposition: symmetric-group-congruence}
    For $\sigma,\sigma' \in S_n$, the following are equivalent:
    \begin{enumerate}[i)]
        \item $\sigma \sim_m^{comb} \sigma'$;
        \item $\sigma \sim_m \sigma'$;
        \item $\chi(\sigma) \equiv \chi(\sigma') \mod p^m$ for all characters $\chi$ of $S_n$.
    \end{enumerate}
\end{proposition}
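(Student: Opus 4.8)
The plan is to prove the chain of implications $i) \Rightarrow ii) \Rightarrow iii) \Rightarrow i)$. The implication $ii) \Rightarrow iii)$ is immediate from Theorem \ref{theorem: prime-power-congruence}, so the work is in the other two.

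\medskip

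\textbf{Proof of $i) \Rightarrow ii)$.} It suffices to handle a single elementary move: suppose $\mu$ is obtained from the cycle type $\lambda$ of $\sigma$ by replacing $p^m$ parts of size $k$ with $p^{m-1}$ parts of size $kp$. I would exhibit an element $h$ commuting with a suitable conjugate $\sigma$ of cycle type $\lambda$, with $h$ of order a power of $p$, such that $\sigma h^{p^{m-1}}$ has cycle type $\mu$. Concretely, isolate the $p^m$ chosen $k$-cycles; they act on a set $X$ of size $p^m k$. Partition $X$ into $p^{m-1}$ blocks of $p$ cycles each. Within one such block, let the $p$ $k$-cycles be $c_0,\dots,c_{p-1}$ on disjoint sets of size $k$; let $h$ cyclically permute these $p$ sets in a way compatible with the cycle structure (so $h$ has order $p^s$ for some $s$ — in fact one can arrange $h$ to be a product of $k$ $p$-cycles, order $p$, but iterating the construction may require larger $p$-power order, hence the definition of $\sim_m$ allows any $p$-power order). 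Then $\sigma h$ is a single $kp$-cycle on that block while $h^{p^{m-1}}$... — here I need to be careful: I want $\sigma \cdot (\text{something})^{p^{m-1}}$, so across the $p^{m-1}$ blocks I take $h$ to be the product of the block-cyclings, arranged so that $h$ has order $p^{m-1}\cdot(\text{small }p\text{-power})$ and $h^{p^{m-1}}$ within each block merges the $p$ $k$-cycles into one $kp$-cycle. The cleanest route: build $h$ of order exactly $p^{m}$ (or $p^{m-1}$) so that $h^{p^{m-1}}$ is a product of $k$ disjoint $p$-cycles on \emph{each} block simultaneously; multiplying $\sigma$ by this merges each group of $p$ $k$-cycles into a single $kp$-cycle, turning $\lambda$ into $\mu$. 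Since $h$ is built from the $k$-cycle supports it commutes with $\sigma$, giving $\sigma \sim_m \sigma h^{p^{m-1}}$, hence $\sigma \sim_m \sigma'$ after conjugating to standard form. The main obstacle here is verifying the order-of-$h$ bookkeeping and that the product of the local constructions really commutes with $\sigma$ and produces exactly cycle type $\mu$; this is a finite but slightly fiddly permutation computation.

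\medskip

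\textbf{Proof of $iii) \Rightarrow i)$ (the new direction).} Suppose $\sigma \not\sim_m^{comb} \sigma'$; I must produce a character $\chi$ of $S_n$ with $\chi(\sigma) \not\equiv \chi(\sigma') \bmod p^m$. The key is to understand the $\sim_m^{comb}$-equivalence classes and find a $\sim_m^{comb}$-invariant that is $p$-adically sensitive. Following the structure of the Peluse--Soundararajan argument, one works with the induced characters $\Ind_{S_\lambda}^{S_n} 1$ (permutation characters on cosets of Young subgroups), or better, with the basis of power-sum / Newton-type class functions, and tracks $p$-divisibility via $p$-adic valuations of centralizer orders. A natural complete invariant: for each $k$ prime to $p$ and each $j \ge 0$, a $\sim_m^{comb}$ move changes the multiset of (part size, $p$-adic valuation) data in a controlled way, and one extracts an invariant by a ``carrying'' argument analogous to base-$p$ addition on the multiplicities of parts of size $k p^j$. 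Then I would exhibit a virtual character (a $\mathbb Z$-combination of the $\Ind_{S_\lambda}^{S_n}1$) whose value on a conjugacy class is, up to a unit, $p^{\text{something}}$ times this invariant, so that two classes in different $\sim_m^{comb}$-classes are separated mod $p^m$. Alternatively, and perhaps more cleanly, use Proposition \ref{prop: upper-bound-on-congruence} style induced-character test functions $\Ind_C^{S_n} 1$ for $C$ cyclic, whose values are $0$ off the cyclic subgroup and have controlled $p$-valuation on it, to detect the finest distinction. The hard part is identifying the right complete set of $\sim_m^{comb}$-invariants and checking that each is realized, modulo $p^m$, by an actual (virtual) character — this is where the combinatorics of base-$p$ carrying meets the representation theory, and it is the technical heart of the converse. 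Once the separating character is in hand, the contrapositive gives $iii) \Rightarrow i)$, closing the cycle.
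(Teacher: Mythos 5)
The decisive gap is in iii) $\Rightarrow$ i), which is exactly the new content of the proposition (ii) $\Rightarrow$ iii) is, as you say, just Theorem \ref{theorem: prime-power-congruence}). Your outline names plausible tools --- Young permutation characters, $p$-adic valuations, a base-$p$ ``carrying'' invariant --- but never produces the separating (virtual) character, and you yourself flag that identifying it is ``the technical heart'' you have not carried out; as written this is a plan, not a proof. The paper needs no complete set of $\sim_m^{comb}$-invariants: it argues one cycle length at a time. If $k$ is the smallest cycle length occurring with different multiplicities $i,j$ in $\sigma,\sigma'$, it evaluates the single permutation character $M^\lambda=\Ind_{S_\lambda}^{S_n}1$ with $\lambda=(n-pk,k^p)$, and Lemma \ref{lemma: axehead-congruence} shows that each elementary move replacing $p^m$ parts $k$ by $p^{m-1}$ parts $pk$ shifts $M^\lambda$ by exactly $-p^m$ modulo $p^{m+1}$ (counting row decompositions: the main term is computed via Wilson's theorem, the error term is killed by an $S_p\times S_{p^m}$-orbit and Sylow argument). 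This feeds a double induction, on $m$ and backwards on $k$, which uses the already-proved implication i) $\Rightarrow$ iii) to replace $\sigma'$ by an equivalent $\sigma''$ agreeing with $\sigma$ in its $k$-cycles; the resulting congruence $M^\lambda_\mu\equiv M^\lambda_\nu-(i-j)\bmod p^m$ forces $p^m\mid i-j$. Without an analogue of this quantitative lemma your sketch cannot close; in particular it is far from clear that the cyclic test characters $\Ind_C^{S_n}1$ you mention can detect multiplicity differences modulo $p^m$, since they vanish off a single cyclic subgroup.

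There is also a concrete error in your i) $\Rightarrow$ ii) construction. The ``cleanest route'' you propose --- $h^{p^{m-1}}$ acting on each block of $p$ $k$-cycles as $k$ disjoint untwisted $p$-cycles --- fails whenever $p\mid k$: for $p=k=2$ one has $(12)(34)\cdot(13)(24)=(14)(23)$, which has the same cycle type rather than being a $4$-cycle, and in general the untwisted interleaving leaves the cycle type of the block unchanged when $p\mid k$. A twist is needed, chosen so that the merged cycle has length $kp$ while $h$ keeps $p$-power order; moreover for $m$ large relative to $\ord_p k$ the element $h$ cannot preserve the blocks at all, since then $h^{p^{m-1}}$ would be trivial on each block --- so the ``order bookkeeping'' you deferred is where the actual content sits. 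The paper sidesteps all of this with one observation: write the product $\sigma_1$ of the $p^m$ chosen $k$-cycles as $\tau^{p^m}$ for a single $kp^m$-cycle $\tau$ commuting with $\sigma$; then $\tau^{p^{m-1}}\sigma_2$ represents the class of $\sigma'$ and equals $\sigma$ times a $p^{m-1}$-st power of a power of $\tau$, so $h$ can be taken inside $\langle\tau\rangle$ (choosing the exponent divisible by the prime-to-$p$ part of $k$ and prime to $p$ makes $h$ of $p$-power order and gives exactly the cycle type $\mu$ --- this is the correct form of the twist your block construction was missing).
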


To prove Proposition \ref{proposition: symmetric-group-congruence}, we need to construct certain representations of the symmetric group. 
Given a partition $\lambda \vdash n$, let $S_\lambda = S_{\lambda_1}\times S_{\lambda_2}\times\cdots$ be the Young subgroup associated to $\lambda$, and let $M^\lambda$ be the character $Ind_{S_\lambda}^{S_n} 1$ induced from the trivial character of $S_\lambda$.
It is a standard fact that $\{M^\lambda\}_{\lambda \vdash n}$ is a basis for the representation ring of $S_n$ \cite[Theorem 2.2.10]{james2006representation},
although we do not need it. Given $\mu \vdash n$, let $M^\lambda_\mu$ denote the value of the character $M^\lambda$ at the conjugacy class with cycle type $\mu$. We give a combinatorial interpretation of the character values $M^\lambda_\mu$:

\begin{definition}
    For $\lambda,\mu \vdash n$, a \emph{row decomposition} of $\lambda$ by $\mu$ is a function $f:\{\text{rows of }\mu\} \to \{\text{rows of }\lambda\}$ such that the preimage of a row of length $k$ is a set of rows with total length $k$.
    Let $RD(\lambda,\mu)$ be the set of all row decompositions of $\lambda$ by $\mu$.
\end{definition}

The following is then immediate from the definition of $M^\lambda$:
\begin{lemma}\label{lemma: row-decomposition-character-formula}
    If $\lambda,\mu \vdash n$, then $M^\lambda_\mu = |RD(\lambda,\mu)|$.
\end{lemma}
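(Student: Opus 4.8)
The plan is to recognize $M^\lambda$ as an honest permutation character and then apply the orbit-counting interpretation of permutation character values. The character $M^\lambda = \Ind_{S_\lambda}^{S_n} 1$ is afforded by the permutation action of $S_n$ on the coset space $S_n/S_\lambda$. First I would identify this coset space concretely: since $S_\lambda = S_{\lambda_1}\times S_{\lambda_2}\times\cdots$ is the stabilizer of the ordered set partition of $\{1,\dots,n\}$ into the standard consecutive blocks of sizes $\lambda_1,\lambda_2,\dots$, the set $S_n/S_\lambda$ is canonically the set of ordered set partitions $(B_1,B_2,\dots)$ of $\{1,\dots,n\}$ with $|B_i| = \lambda_i$ (the $\lambda$-tabloids), on which $S_n$ acts by permuting the underlying points while preserving the labels of the blocks.

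Next I would invoke the standard fact that the value of a permutation character at $\sigma$ is the number of points fixed by $\sigma$. Thus $M^\lambda_\mu$, for $\sigma$ of cycle type $\mu$, equals the number of ordered set partitions $(B_1,B_2,\dots)$ with $|B_i| = \lambda_i$ that are fixed by $\sigma$, i.e.\ with $\sigma(B_i) = B_i$ for every $i$. The crux is the elementary observation that a subset of $\{1,\dots,n\}$ is $\sigma$-invariant precisely when it is a union of cycles of $\sigma$. Hence a fixed tabloid is exactly a way of distributing the cycles of $\sigma$ among the blocks so that the cycles placed in $B_i$ have total length $\lambda_i$.

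Finally I would match this data with the definition of $RD(\lambda,\mu)$. Since $\sigma$ has cycle type $\mu$, its cycles are in bijection with the rows of $\mu$ (cycle lengths matching row lengths), and the blocks are the rows of $\lambda$; an assignment of cycles to blocks respecting the size constraint is therefore the same datum as a function $f$ from rows of $\mu$ to rows of $\lambda$ whose fiber over a row of length $k$ consists of rows of total length $k$, i.e.\ an element of $RD(\lambda,\mu)$. As distinct assignments of cycles yield distinct tabloids (the cycles are disjoint, so each block is recovered as the union of the cycles sent to it), this correspondence is a bijection, giving $M^\lambda_\mu = |RD(\lambda,\mu)|$.

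The argument is essentially bookkeeping, so I expect no serious obstacle; the one point demanding care is the treatment of repeated parts. One must check that counting functions on the labeled rows of $\mu$ (rather than on unordered multisets of equal-length parts) is the correct count. This is justified because the cycles of $\sigma$ of a given length are genuinely distinct subsets of $\{1,\dots,n\}$ and so are faithfully indexed by the distinct rows of $\mu$ of that length; since the size constraint depends only on lengths, the resulting count is independent of the chosen indexing, and likewise the labeling of the rows of $\lambda$ matches the labeling of the blocks in $S_n/S_\lambda$.
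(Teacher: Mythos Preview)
Your argument is correct and is exactly the standard unpacking the paper has in mind: the paper gives no proof at all, merely stating that the lemma is ``immediate from the definition of $M^\lambda$,'' and your permutation-character/fixed-point argument is precisely the computation that makes this immediate. Your care with repeated parts is appropriate and matches the paper's convention that rows are labeled.
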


\begin{lemma}\label{lemma: axehead-congruence}
    Let $\mu = (\xi,k^{p^m})$ and $\nu = (\xi, (pk)^{p^{m-1}})$ be partitions of $n$ for positive integers $k,m$ and a fixed partition $\xi \vdash n-kp^m$. Then for $\lambda = (n-pk,k^p)$ we have 
    \[ M^\lambda_\mu \equiv M^\lambda_\nu - p^m \mod p^{m+1}\]
\end{lemma}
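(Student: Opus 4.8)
The plan is to use the combinatorial formula $M^\lambda_\mu = |RD(\lambda,\mu)|$ of Lemma~\ref{lemma: row-decomposition-character-formula}, and to sort the row decompositions of $\lambda$ according to how the ``extra'' parts of $\mu$ get used. Write the rows of $\lambda = (n-pk, k^p)$ as one \emph{long row} $R_0$ of size $n-pk$ together with $p$ \emph{short rows} $R_1,\dots,R_p$ of size $k$; write the rows of $\mu = (\xi, k^{p^m})$ as the rows of $\xi$ together with $p^m$ \emph{new parts} of size $k$, and the rows of $\nu = (\xi, (pk)^{p^{m-1}})$ as the rows of $\xi$ together with $p^{m-1}$ \emph{new parts} of size $pk$. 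Since every part is positive and every short row has size $k$, any part of size $k$ placed in a short row is that row's unique occupant, and no part of size exceeding $k$ can go in a short row at all. For $0 \le j \le p$ let $X_j \subseteq RD(\lambda,\mu)$ consist of the decompositions in which exactly $j$ of the $p^m$ new parts of $\mu$ lie in short rows, so that $M^\lambda_\mu = \sum_{j=0}^p |X_j|$.

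First I would identify $|X_0| = M^\lambda_\nu$. In a decomposition in $X_0$, all $p^m$ new parts of $\mu$ go to $R_0$, and what remains is an assignment of the rows of $\xi$ to the rows of $\lambda$ with each short-row fibre summing to $k$. In a row decomposition of $\lambda$ by $\nu$, the new parts (of size $pk > k$) likewise all go to $R_0$, leaving exactly the same residual data. One checks the constraint on the long row is automatic in both cases — the totals work out from $|\xi| = n - p^m k$ — giving a bijection $X_0 \leftrightarrow RD(\lambda,\nu)$.

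Next, for $1 \le j \le p-1$ I claim $p^{m+1} \mid |X_j|$. Choosing which $j$ new parts of $\mu$ go to short rows contributes a factor $\binom{p^m}{j}$, of $p$-adic valuation exactly $m$ since $1 \le j \le p-1$; so it suffices to show the number $c_j$ of ways to complete such a choice to a row decomposition is divisible by $p$. The cyclic group $C_p$ acts on these completions by cyclically permuting $R_1,\dots,R_p$, and the action is free: a fixed point would force the (nonempty, as $j \ge 1$) collection of parts lying in short rows to have image inside the fixed-point set of a $p$-cycle, which is empty. Hence $p \mid c_j$ and $p^{m+1} \mid \binom{p^m}{j}c_j = |X_j|$. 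Finally, in $X_p$ each short row receives exactly one new part and no row of $\xi$ meets a short row, so $|X_p| = \binom{p^m}{p}\, p! = p^m(p^m - 1)(p^m - 2)\cdots(p^m - p + 1)$; reducing $(p^m-1)\cdots(p^m-p+1)$ modulo $p$ and invoking Wilson's theorem gives $(p^m-1)\cdots(p^m-p+1) \equiv (-1)^{p-1}(p-1)! \equiv -1 \pmod p$, so $|X_p| \equiv -p^m \pmod{p^{m+1}}$. Summing over $j$ yields $M^\lambda_\mu \equiv |X_0| - p^m \equiv M^\lambda_\nu - p^m \pmod{p^{m+1}}$, as claimed.

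I expect the $1 \le j \le p-1$ range to be the main obstacle: one must pin down $\ord_p \binom{p^m}{j} = m$ exactly, and, more delicately, isolate the piece of a row decomposition on which $C_p$ acts freely — namely the assignment of the already-chosen parts to the short rows — and verify this is precisely the factor not already accounted for by $\binom{p^m}{j}$. The identification $|X_0| = M^\lambda_\nu$ also rests on a small but necessary check that the long-row constraint is vacuous on both sides, and one should watch the degenerate cases (for instance $n - pk = 0$, or $|\xi| < pk$) where several of these counts collapse to zero simultaneously.
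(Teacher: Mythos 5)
Your proof is correct, and its skeleton is the same as the paper's: both sort $RD(\lambda,\mu)$ by how many of the $p^m$ new parts of size $k$ land in the short rows $k^p$, identify the ``none'' stratum with $RD(\lambda,\nu)$, and compute the ``all $p$'' stratum as $(p^m)!/(p^m-p)! \equiv -p^m \bmod p^{m+1}$ via Wilson's theorem. Where you genuinely diverge is the intermediate strata $1 \le j \le p-1$. The paper treats them all at once as a single set $R_2$ carrying an action of $S_p \times S_{p^m}$ (permuting the short rows of $\lambda$ and the new parts of $\mu$), and shows every orbit has size divisible by $p^{m+1}$ by analyzing a $p$-Sylow subgroup $Q$ of a stabilizer: the projection to $S_p$ is trivial, the projection to $S_{p^m}$ lands in $S_{p^m-\ell}$, and an index computation finishes. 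You instead factor $|X_j| = \binom{p^m}{j} c_j$ — which requires the small but genuine observation that the number of completions is independent of the chosen $j$-subset, justified by the $S_{p^m}$-symmetry among the identical new parts — then use $\ord_p\binom{p^m}{j} = m$ and a free action of $C_p$ rotating the short rows to get $p \mid c_j$. Your route is a bit more elementary (no Sylow theory, just a binomial valuation and a fixed-point-free cyclic action), while the paper's avoids the factorization step by working orbit-by-orbit on all of $R_2$; the essential insight is shared, namely that a $p$-cycle on the $p$ short rows cannot stabilize a decomposition in which some but not all of the short-row content comes from the new parts. Your closing caveats (independence of $c_j$ from the subset, the vacuity of the long-row constraint, degenerate cases like $n = pk$ where both sides of the $X_0$ bijection are empty) are exactly the right points to check, and all of them go through.
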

\begin{proof}
    Given $\mu$ and $\nu$ as in the statement, let 
    \begin{align*}
        R_1 &= \{\rho \in RD(\lambda,\mu) \mid \text{no rows of }k^{p^m}\text{ are placed in }k^p\text{ in }\rho\}, \\
        R_2 &= \{\rho \in RD(\lambda,\mu) \mid \text{between }1 \text{ and }p-1\text{ rows of }k^{p^m}\text{ are placed in }k^p\text{ in }\rho\}, \\
        R_3 &= \{\rho \in RD(\lambda,\mu) \mid p\text{ rows of }k^{p^m}\text{ are placed in }k^p\text{ in }\rho\}.
    \end{align*}
    Observe that every $\rho \in RD(\lambda,\nu)$ has all rows of $(pk)^{p^{m-1}}$ placed in the row of length $n-pk$. This gives a bijection $R_1 \cong RD(\lambda,\nu)$. Also, there are $(p^m)!/(p^m-p)!$ ways of choosing $p$ out of $p^m$ rows with some order,
    and given such a choice, tiling the rows $k^p$ accordingly gives a unique element of $R_3$. It follows that 
    \[|R_3| = (p^m)!/(p^m-p)! = p^m(p^m-1)\cdots(p^m-p+1).\]
    By Wilson's theorem, $(p-1)! \equiv -1 \mod p$, so $|R_3| \equiv -p^{m} \mod p^{m+1}$.
    
    By Lemma \ref{lemma: row-decomposition-character-formula}, $|R_1| + |R_2| + |R_3| = |RD(\lambda,\mu)|$ and $|R_1| = |RD(\lambda,\nu)|$. 
    We have shown $|R_3| \equiv -p^m \mod p^{m+1}$; thus, we must show $|R_2| \equiv 0 \mod p^{m+1}$. 
    
    We show $p^{m+1}$ divides $|R_2|$ by considering orbits of a group and its $p$-Sylow subgroup acting on $R_2$.
    There is an action of the symmetric group $S_p$ on $RD(\lambda,\mu)$ by permuting the $p$ rows tiled into $k^p$. There is a commuting action of $S_{p^m}$ on $RD(\lambda,\mu)$ by permuting the placements of the rows of length $k$ of $\mu$ not in $\xi$. Given $\rho \in R_2$, let $Q$ be the $p$-Sylow subgroup of the stabilizer of $\rho$ in $S_p \times S_{p^m}$. We will analyze the size of $Q$. 
    
    Let $\pi_1: Q \to S_p$ and $\pi_2: Q \to S_{p^m}$ be the projections onto the two direct factors of $S_p \times S_{p^m}$. If $\sigma \in Q$, then since $\rho$ has between $1$ and $p-1$ rows of $k^{p^m}$ placed into $k^p$, $\pi_1(\sigma)$ does not act transitively on the $p$ rows in $k^p$. Since $Q$ is a $p$-group, $\pi_1(\sigma)$ is either the identity or the $p$-cycle; since $\pi_1(\sigma)$ does not act transitively, $\pi_1(\sigma) = 1$. Thus $\pi_1$ is trivial and $\pi_2$ is injective.
    
    If $\rho$ has exactly $\ell$ rows of $k^{p^m}$ placed into $k^p$, then $\pi_2(\sigma)$ must fix those $\ell$ rows. Hence $\pi_2(Q) \subseteq S_{p^m -\ell}$. Now $p^m$ divides $[S_{p^m}: S_{p^m - \ell}]$. It follows that $p^{m+1}$ divides $[S_p \times S_{p^m}: Q]$. As the index of $Q$ in the stabilizer of $\rho$ is prime to $p$, we conclude $p^{m+1}$ divides the cardinality of the orbit of $\rho$. 
    
    Since $p^{m+1}$ divides the size of every orbit of $S_{p} \times S_{p^m}$ on $R_2$, it follows that $p^{m+1}$ divides $|R_2|$. This proves the claim.
\end{proof}

\begin{proof}[Proof of Proposition \ref{proposition: symmetric-group-congruence}]
    
 i) $\implies$ ii): suppose that $\sigma \sim_m^{comb} \sigma'$. 
    We may assume that the cycle type of $\sigma'$ is formed from that of $\sigma$ by replacing $p^m$ cycles of length $k$ with $p^{m-1}$ cycles of length $pk$.
    Write $\sigma = \sigma_1\sigma_2$ where $\sigma_1$ is a product of $p^m$ disjoint $k$-cycles and $\sigma_2$ is disjoint from $\sigma_1$. Then $\sigma_1 = \tau^{p^m}$ for $\tau$ a $p^mk$-cycle also disjoint from $\sigma_2$. 
    If $k = p^i \cdot \ell$ where $\ell$ and $p$ are relatively prime,
    then the cyclic group $C = \langle \tau \rangle$ is isomorphic to $\mathbb{Z}/p^mk\mathbb{Z} \cong \mathbb{Z}/p^{m+i}\mathbb{Z} \times \mathbb{Z}/\ell\mathbb{Z}$,
    and $\sigma_1$ corresponds to $(p^m,p^m) \in \mathbb{Z}/p^{m+i}\mathbb{Z} \times \mathbb{Z}/\ell\mathbb{Z}$.

   Observe that $(a,b) \in \mathbb{Z}/p^{m+i}\mathbb{Z} \times \mathbb{Z}/\ell\mathbb{Z}$ has order a power of $p$ if and only if $b=0$, and it is a $p^{m-1}$th power of an element if and only if $a$ is divisible by $p^{m-1}$.  Moreover, under the identification $\langle \tau \rangle \cong \mathbb{Z}/p^{m+i}\mathbb{Z} \times \mathbb{Z}/\ell\mathbb{Z}$, an element $(a,b)$ corresponds to a product of $p^{m-1}$ disjoint $pk$-cycles exactly when it has order $pk$, which happens if and only if $a$ is divisible by $p^{m-1}$ but not $p^m$ and $b$ is relatively prime to $\ell$.  So we see that $(p^m, p^m) = (p^{m-1}, 0) + ((p-1)p^{m-1}, p^m)$ corresponds to a decomposition of $\sigma_1$ as a $p^{m-1}$th power of an element of order a power of $p$, times a product of $p^{m-1}$ disjoint $pk$-cycles. By construction, this decomposition consists of elements commuting with $\sigma$. Thus $\sigma \sim_m \sigma'$.

    ii) $\implies$ iii): this is Theorem \ref{theorem: prime-power-congruence}.
    
    iii) $\implies$ i):
    We proceed by double induction: on $m$, and backwards on the length of the smallest cycle where $\sigma$ and $\sigma'$ differ in cycle type. The base case $m=1$ follows from Brauer's theory: if $\sigma$ has cycle type $\mu$, then the semisimple part $\sigma_s$ has cycle type formed by replacing parts of size $p^\ell k, (k,p)=1$, in $\mu$ with $p^\ell$ parts of size $k$. Hence, if $\sigma_s$ and $\sigma'_s$ are conjugate, then $\sigma \sim_1^{comb} \sigma'$.
    
    Now suppose that $m \geq 2$ and $\sigma,\sigma' \in S_n$ are such that $\chi(\sigma) \equiv \chi(\sigma') \mod p^m$ for all characters $\chi$ of $S_n$. Let $k$ be the smallest cycle length which appears in $\sigma$ and $\sigma'$ with different multiplicity. Then $\sigma$ and $\sigma'$ have cycle types $\mu = (\xi,k^i,\eta)$ and $\nu = (\xi,k^j,\tau)$ for partitions $\xi$, $\eta$, and $\tau$ such that all parts in $\eta$ and $\tau$ have size greater than $k$. By induction, we have $\sigma \sim_{m-1}^{comb} \sigma'$, which implies $p^{m-1} \mid (i-j)$. This implies $n \geq pk$, so we may set $\lambda = (n-pk,k^p)$. Any row decomposition of $\lambda$ by $\mu$ or $\nu$ places no parts of $\eta$ or $\tau$ into the parts of length $k$. Hence applying Lemma \ref{lemma: axehead-congruence} $(i-j)/p^{m-1}$ times gives
    \[ M^\lambda_\mu = M^\lambda_\nu - (i-j) \mod p^{m}.\]
    Since $\chi(\sigma) \equiv \chi(\sigma') \mod p^m$ for all characters $\chi$ of $S_n$, we conclude $p^m \mid (i-j)$. Assume without loss of generality that $i > j$. Then $\sigma' \sim_m^{comb} \sigma''$ for $\sigma''$ of cycle type $(\xi,k^i, (pk)^{(i-j)/p},\tau)$. By the case i) $\implies$ iii) above, we have $\chi(\sigma') \equiv \chi(\sigma'') \mod p^{m}$ for all characters $\chi$ of $S_n$. Now $\sigma$ and $\sigma''$ have the same number of cycles of length $k$, so by induction, $\sigma \sim_m^{comb} \sigma''$ and thus $\sigma \sim_m^{comb} \sigma'$.
\end{proof}

\section{Questions and Conjectures}

Proposition \ref{proposition: symmetric-group-congruence} says that $\sim_m$ fully characterizes higher congruences in the character table of the symmetric group, and based on our limited calculations it seems like this is often the case.
However as we saw in Example \ref{heisenbergex} this is not the case in general -- it seems to fail sometimes for $p$-groups (and their close relatives).  It is not clear to us why this is the case, so we will close the paper out with a few related questions and conjectures:

\begin{problem}\label{problem: single-prime-congruence}
    Given a fixed prime $p$, characterize the finite groups $G$ such that $\chi(g) \equiv \chi(g') \mod p^m$ for all unramified $\chi$ if and only if $g \sim_m g'$ as in Definition \ref{definition: prime-power-relation}.
\end{problem}

\begin{problem}\label{problem: all-prime-congruence}
    Characterize the finite groups $G$ such that for all primes $p$, $\chi(g) \equiv \chi(g') \mod p^m$ for all unramified $\chi$ if and only if $g \sim_m g'$ as in Definition \ref{definition: prime-power-relation}.
\end{problem}

\begin{conjecture}
    If $G$ is a finite simple group of Lie type and $p$ is a prime different from the defining characteristic of $G$, 
    then
    $\chi(g) \equiv \chi(g') \mod p^m$ for all unramified $\chi$ if and only if $g \sim_m g'$.
\end{conjecture}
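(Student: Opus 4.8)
The ``if'' direction is Theorem~\ref{theorem: prime-power-congruence}, so the content is the converse. First, a caveat: since unramified characters are constant on $\Gamma$-conjugacy classes (Proposition~\ref{prop: character-conjugacy}) but $\sim_m$ as defined uses only honest conjugacy, one should read the conjecture with $\sim_m$ generated by $\Gamma$-conjugacy together with the moves $g \mapsto g h^{p^{m-1}}$. This distinction is invisible for $S_n$ but not here: for $G = \mathrm{PSL}_2(q)$ with $v_p(q-1) = 1$ and $p \geq 5$, an element $g$ of order $p$ and $g^2$ are $\Gamma$-conjugate, hence equal on all unramified characters, yet are not $\sim_m$-related for $m \geq 2$ in the literal sense, there being no commuting $p$-element that is a $p$th power. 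With the $\Gamma$-equivariant reading fixed, the plan is to follow the architecture of the proof of Proposition~\ref{proposition: symmetric-group-congruence}: induct on $m$, with base case $m = 1$ supplied by Brauer's theory --- congruence mod $p$ forces the $p$-semisimple parts to be $\Gamma$-conjugate, so after conjugating and applying a $\Gamma$-power we may take $g_s = g_s' = s$ --- and then work inside $C = C_G(s)$, where $g_u, g_u'$ are $p$-elements.

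The first real step is to push the congruence question from $G$ down to $C$. Because $p$ is not the defining characteristic, $g_u$ is a semisimple element of the ambient algebraic group $\mathbf G$, and (when $\mathbf G$ has connected centre) $C$ is the group of fixed points of a connected reductive subgroup, i.e.\ a group of Lie type; Lusztig's Jordan decomposition parametrizes $\mathrm{Irr}(G)$ through $C$, and the Deligne--Lusztig character formula expresses the value of a character of $G$ at $g = s g_u$ as a corresponding character value of $C$ at $g_u$ times a Green-function term and a factor which is an index of connected centralizers --- a $p'$-number in good cases. I would make this into the equivalence: $\chi(g) \equiv \chi(g') \bmod p^m$ for all unramified $\chi$ of $G$ if and only if $\psi(g_u) \equiv \psi(g_u') \bmod p^m$ for all unramified $\psi$ of $C$. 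The essential input is a ``fullness'' statement --- every unramified character of $C$ is detected mod $p^m$, with $p$-adically controlled multiplicity, by Deligne--Lusztig induction from $C$ --- together with the remark that a commuting $p$-element of $C$ realizing a $\sim_m$-move lies in $G$, so $\sim_m$-equivalences in $C$ propagate upward, which is the direction we need.

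It then remains to prove the statement for $p$-elements of a group of Lie type in non-defining characteristic. Here I would replicate the symmetric-group double induction --- on $m$, and on a structural parameter recording the smallest ``place'' where $g_u$ and $g_u'$ differ --- with the $M^\lambda$ of $S_n$ replaced by permutation characters $\Ind_{N_G(\mathbf S)}^G 1$ for $\mathbf S$ a $\Phi_d$-torus, $d = \ord_p(q)$ (or by finer $d$-Harish-Chandra induced characters). The Brou\'e--Malle--Michel description of Sylow $p$-subgroups in terms of $\Phi_d$-tori supplies the combinatorial model of $\sim_m$ on conjugacy classes playing the role of the partition picture --- explicit via symbols and signed partitions for classical groups, a finite list for exceptional groups --- and in the generic case $p \nmid |W|$, where the relevant Sylow subgroup is abelian and contained in a torus, the decisive congruence should again be a Wilson's-theorem count exactly as in Lemma~\ref{lemma: axehead-congruence}.

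The main obstacle is the small and bad primes. When $p \mid |W|$ the $p$-local structure can acquire non-abelian pieces reminiscent of Example~\ref{heisenbergex}, and one must either produce enough Deligne--Lusztig, Gelfand--Graev, or cuspidal characters to detect the finer congruences --- exploiting the full $p$-adic precision of Lusztig's character-value formulas, not just their reduction mod $p$ --- or else discover that the correct statement restricts $m$ relative to $v_p(|G|)$, as the $\mathrm{PSL}_2$ example already hints. A secondary difficulty is bookkeeping: Lusztig's Jordan decomposition is a bijection only up to the behaviour of component groups and of non-split centralizers, so controlling signs and $p'$-multiplicities in Deligne--Lusztig induction with $p$-adic rather than merely mod-$p$ accuracy, and handling the disconnectedness of $C_G(s)$ and of $\mathbf G$ outside the simply connected case, will require care. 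I expect the classical groups to yield to explicit combinatorics and the exceptional groups to a finite case analysis.
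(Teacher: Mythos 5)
The statement you are addressing is one of the paper's open \emph{conjectures}; the paper offers no proof of it, so there is no argument of the authors to compare yours against. Your caveat about $\Gamma$-conjugacy is a genuinely valuable observation and is worth isolating: since unramified characters are constant on $\Gamma$-conjugacy classes while Definition~\ref{definition: prime-power-relation} generates $\sim_m$ from ordinary conjugacy only, your $\mathrm{PSL}_2(q)$ example (with $v_p(q-1)=1$, $p\geq 5$, so that the Sylow $p$-subgroup is cyclic of order $p$ and the moves $g\mapsto gh^{p^{m-1}}$ are trivial for $m\geq 2$, while $g$ and $g^2$ are $\Gamma$-conjugate but not conjugate) appears to refute the literal statement and shows the conjecture must be read with $\sim_m$ augmented by $\Gamma$-conjugacy. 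That is a concrete contribution, and a similar subtlety already lurks in the paper's $m=1$ discussion, where ``agree on all Brauer characters'' is strictly stronger than ``agree mod $p$ on all unramified characters'' for elements that are $\Gamma$-conjugate but not conjugate.

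Beyond that, however, what you have written is a research program, not a proof, and you say so yourself. Every load-bearing step is asserted rather than established: (1) the claimed equivalence between ``$\chi(g)\equiv\chi(g')\bmod p^m$ for all unramified $\chi$ of $G$'' and the corresponding statement for the unipotent... rather, $p$-parts in $C_G(g_s)$ rests on an unproven ``fullness'' property of Deligne--Lusztig induction with $p$-adic (not merely mod-$p$) control of the Green-function factors and of the signs and multiplicities in Lusztig's Jordan decomposition --- precisely the place where component groups, disconnected centralizers, and bad primes intervene; (2) the combinatorial model of $\sim_m$ on $p$-elements via Brou\'e--Malle--Michel $\Phi_d$-tori is not constructed, and the analogue of Lemma~\ref{lemma: axehead-congruence} (the Wilson-theorem count) is not proved for the permutation characters $\Ind_{N_G(\mathbf S)}^G 1$; (3) the case $p\mid |W|$, where the Sylow subgroup is non-abelian and Example~\ref{heisenbergex}-type obstructions can appear, is explicitly deferred. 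Until at least the reduction in (1) and the key congruence in (2) are carried out in some family (say $\mathrm{PSL}_n(q)$ with $p\nmid n$, $d=1$), this remains a plausible plan whose main verifiable content is the corrected formulation of the conjecture.
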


\begin{question}\label{question: p-group-depth-of-congruence}
Let $G$ be a $p$-group.  For $g \in G$, what is the largest power $p^m$ such that $\chi(g) \equiv \chi(e) \mod p^m$ for all unramified characters $\chi$?
\end{question}

\begin{remark}
We could ask the same question for an arbitrary group $G$, but the $p$-group case is of particular interest. Moreover, we have reason to believe the $p$-group case might have a relatively nice answer. The unramified characters of $p$-groups are the same as the rational characters, and a result of Ford (\cite{ford87}, Theorem 3) says that every rational character of a $p$-group can be expressed as a difference of two permutation characters.  This suggests there is a group-theoretic answer to Question \ref{question: p-group-depth-of-congruence} in terms of subgroups and conjugacy inside $G$, although thus far we have been unable to find one.
\end{remark}

\printbibliography

\end{document}